\theoremstyle{plain}
\newtheorem{thm}{Theorem}
\newtheorem{lemma}[thm]{Lemma}
\newtheorem{korollar}[thm]{Corollary}
\newtheorem{prop}[thm]{Proposition}
\theoremstyle{remark}
\theoremstyle{definition}
\newtheorem{example}[thm]{Example}
\newtheorem{defin}[thm]{Definition}
\newcommand{\spec}[1]{\ensuremath{\text{Spec }#1}}
\newcommand{\mymin}[1]{\min\left(#1\right)}
\newcommand{\pl}[0]{\mathbb{P}^1_k}
\newcommand{\al}[0]{\mathbb{A}^1_k}
\newcommand{\ku}[0]{k^\times}
\newcommand{\chark}[1]{\ensuremath{\text{char }#1}}
\newcommand{\kTf}[0]{k[T,T^{-1}]}
\title[Reduction behavior for elliptic curves over $\mathbb{P}^1$]{Minimal number of points with bad reduction for elliptic curves over $\mathbb{P}^1$}
\author{Johannes Sprang}
\email{johannessprang@gmx.de}
\date{}
\subjclass[2000]{14H52; 11R58}
\begin{document}

\begin{abstract}
In this work we use elementary methods to discuss the question of the minimal number of points with bad reduction over $\pl$ for elliptic curves $E/k(T)$ which are non-constant resp. have non-constant $j$-invariant.
\end{abstract}

\maketitle

\section[Introduction]{Introduction}

	It is a well known fact going back to J. Tate (see \cite[ch.5, \S 8]{hus}) that there are no elliptic curves over $\mathbb{Q}$ with good reduction everywhere. This was generalized by J.-M. Fontaine  \cite{fon} 
	to abelian varieties over $\mathbb{Q}$. In \cite{schoof} 
	R. Schoof answers the question about the existence of non-zero abelian varieties over $\mathbb{Q}$ with bad reduction at just one prime.\par
	In the case of function fields there are trivial examples of elliptic curves with good reduction everywhere coming from elliptic curves over the field of constants. Thus it is natural to impose further conditions to exclude those trivial examples.\par 
	A. Beauville proves in \cite{bea} 
	that every semi-stable elliptic surface $S\rightarrow \mathbb{P}^1_\mathbb{C}$ has at least four singular fibres. He also showes in this paper that every non-isotrivial elliptic surface $S\rightarrow \mathbb{P}^1_k$ for $k$ an algebraic closed field of characteristic $p>3$ has at least three singular fibres by applying the Riemann-Hurwitz-Theorem to the induced $j$-map. In a remark he claims that similar methods may be used to prove that there are no elliptic curves over $\mathbb{A}^1_k$ with non-constant $j$-invariant and good reduction everywhere in characteristics $2$ and $3$.\par
	In this paper we take a more elementary approach to determine the minimal number of points with bad reduction over $\pl$ for elliptic curves $E$ over $k(T)$, if we require $E$ to be non-constant resp. $j(E)\notin k$. In characteristic different from $2$ and $3$ a non-constant elliptic curve has at least $2$ points with bad reduction. If we furthermore demand $j(E)\notin k$, then there are at least $3$ points with bad reduction. \par
In characteristics $2$ and $3$ we show that every non-constant elliptic curve has at least $1$ point with bad reduction. If we further require $E$ to have non-constant $j$-invariant i.e. $j(E)\notin k$, then we obtain at least $2$ points with bad reduction.
By giving examples we will show that the obtained bounds are sharp.\par

I would like to thank N. Naumann for suggesting this work to me and for his mentoring advice as well as R. Schoof for his initiating idea towards an elementary solution of the problem.

\section[Notation and conventions]{Notation and conventions}

In this section $k$ denotes an algebraically closed field. Let $E$ be an elliptic curve over a function field $K/k(T)$. We call $E$ \textit{constant}, if there exists a Weierstrass equation for $E$ with coefficients in $k$. We say $E$ has\textit{ constant $j$-invariant}, if $j(E)\in k$.
Clearly every constant elliptic curve has constant $j$-invariant. Conversely every elliptic curve with constant $j$-invariant gets constant over some finite extension of $K$. For the following we refer to \cite{liu}[ch. 10]. Let $S=\spec{A}$ be an affine Dedekind scheme (of dimension one) with function field $K(S)$ and $E/K(S)$ an elliptic curve. We say $E$ has good reduction at $s\in S$, if $E$ admits a smooth model over $\spec{\mathcal{O}_{S,s}}$.\par
  In the following we will consider affine open subsets $S=\spec{A}\subseteq \pl$ and elliptic curves $E$ over $K(S)=k(T)$. Since $\text{Pic}(A)=0$ we can choose a globally minimal Weierstrass equation $W$ for $E$ over $S$. Then $E$ has good reduction at $s\in S$, if and only if $\nu_s(\Delta_W)=0$. Here $\nu_s$ denotes the valuation associated to $\mathcal{O}_{S,s}$.

\section[Statement of the results]{Statement of the results}

\begin{thm}\label{statement_1}
	Let $k$ be an algebraically closed field with characteristic different from $2$ and $3$. Further let $E/k(T)$ be an elliptic curve. Then
	\begin{enumerate}[leftmargin=*]
		\item there are at least three points with bad reduction over $\pl$, if $E$ has non-constant $j$-invariant
		\item and at least two points with bad reduction, if $E$ is non-constant.
	\end{enumerate}
	The bounds given in (a) and (b) are sharp.
\end{thm}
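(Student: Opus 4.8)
\emph{Proof plan.} Throughout the plan is to work with a globally minimal short Weierstrass equation $y^2=x^3+Ax+B$, $A,B\in k[T]$, for $E$ over $\al=\spec{k[T]}$; this exists because $\chark k\neq 2,3$ and $\text{Pic}(k[T])=0$, minimality at a prime $\pi$ meaning $\nu_\pi(A)<4$ or $\nu_\pi(B)<6$. Then, $-16$ being a unit, the finite points of bad reduction are exactly the zeros of $\Delta:=4A^3+27B^2$, and the reduction at $\infty$ is read off by substituting $T\mapsto 1/U$ and rescaling $(x,y)$: with $a=\deg A$, $b=\deg B$ and $m:=\max\bigl(\lceil a/4\rceil,\lceil b/6\rceil\bigr)$, the resulting equation near $U=0$ is again minimal and has discriminant of valuation $12m-\deg\Delta$ there; in particular $E$ is constant iff $m=0$, i.e.\ $A,B\in k$. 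The one external input is the Mason--Stothers theorem: for coprime $f,g,h\in k[T]$, not all constant, with $f+g=h$, one has $\max(\deg f,\deg g,\deg h)\le\deg\mathrm{rad}(fgh)-1$, $\mathrm{rad}$ denoting the product of the distinct monic irreducible factors.

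For (b), suppose $E$ is non-constant with at most one point of bad reduction. After an automorphism of $\pl$ I may move that point, if any, to $\infty$, so $\Delta$ has no zero on $\al$, hence $\Delta=c\in\ku$. If $A$ were constant then $27B^2=c-4A^3$ would be constant, so $B$ and hence $E$ constant, a contradiction; so $A$ is non-constant, $\gcd(A,B)=1$ (a common factor divides $c$), and Mason--Stothers applied to $4A^3+27B^2=c$ gives both $3a\le a+b-1$ and $2b\le a+b-1$, i.e.\ $2a\le b-1$ and $b\le a-1$, which is impossible. Hence there are at least two points of bad reduction.

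For (a), assume $j(E)\notin k$; then $A,B\neq 0$ and $A^3/B^2\notin k$, and by (b) together with the assumed bound $E$ has exactly two points of bad reduction, which I move to $0$ and $\infty$. Then $\Delta$ vanishes only at $T=0$, so $4A^3+27B^2=cT^d$ with $c\in\ku$ and $d\ge 1$. Writing $A=T^\alpha\bar A$, $B=T^\beta\bar B$ with $T\nmid\bar A\bar B$ one has $\gcd(\bar A,\bar B)=1$; comparing $T$-adic valuations on the two sides and cancelling a power of $T$, the identity becomes one of
\[
4\bar A^3+27T^{e}\bar B^2=c,\qquad 4T^{e}\bar A^3+27\bar B^2=c,\qquad 4\bar A^3+27\bar B^2=cT^{e}\quad(e\ge 0),
\]
in each of which the three terms are pairwise coprime. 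Applying Mason--Stothers produces degree inequalities that are contradictory by the same kind of count as in (b), \emph{except} when one of $\bar A,\bar B$ is constant; but each such case is closed either by an immediate further application of Mason--Stothers or by noting that it forces $A^3/B^2\in k$, i.e.\ $j(E)\in k$, against the hypothesis. Hence there are at least three points of bad reduction.

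Finally, the bounds are sharp: for (a) the Legendre curve $y^2=x(x-1)(x-T)$ has $\Delta=16\,T^2(T-1)^2$, non-constant $j$, and bad reduction exactly at $0,1,\infty$; for (b) the curve $y^2=x^3+T$ has $\Delta=-432\,T^2$, bad reduction exactly at $0,\infty$, and is non-constant, being a non-trivial sextic twist of the constant curve $y^2=x^3+1$ which $\ku=(\ku)^6$ prevents from having a model over $k$. The step I expect to be the main obstacle is the last one in (a): organising the finitely many cases coming from a common $T$-factor of $A$ and $B$ so that Mason--Stothers always applies to pairwise coprime data, and confirming that the cases it does not kill outright are exactly those with $j(E)\in k$. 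One additional subtlety: in characteristic $p>3$ the Mason--Stothers theorem requires that $f,g,h$ are not all in $k[T^p]$, which here fails precisely when $A$ and $B$ are $p$-th powers; then $\Delta$ is a $p$-th power as well, and since $k$ is perfect one descends along Frobenius to an identity of strictly smaller degree, iterating until the hypothesis holds.
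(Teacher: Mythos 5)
Your plan is correct and follows essentially the same route as the paper: choose a globally minimal short Weierstrass equation over $\spec{k[T]}$ (resp.\ $\spec{\kTf}$ after moving the bad points to $0,\infty$) and apply Mason's $abc$-theorem to the discriminant identity $4A^3+27B^2=cT^d$, with the same Frobenius-descent workaround when all terms lie in $k[T]^p$; the paper packages the case analysis you flag as the "main obstacle" into its Lemma~\ref{lemma2} and Corollary~\ref{korollar1}, and those cases do close exactly as you predict (the surviving solutions are $A=aT^{2n}$, $B=bT^{3n}$ up to the degenerate ones, all giving $j\in k$). The only cosmetic differences are your use of the radical form of Mason--Stothers instead of the height form, and a sextic twist $y^2=x^3+T$ in place of the paper's quartic twist $y^2=x^3-3T^2x$ for sharpness in (b); both verifications are equally routine.
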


\begin{thm}\label{statement_2}
	Let $k$ be an algebraically closed field of characteristic $2$ or $3$ and let $E /k(T)$ be an elliptic curve. Then
	\begin{enumerate}[leftmargin=*]
		\item there are at least two points with bad reduction over $\pl$, if $E$ has non-constant $j$-invariant.
		\item and at least one point with bad reduction, if $E$ is assumed to be non-constant.
	\end{enumerate}
	The bounds given in (a) and (b) are sharp.
\end{thm}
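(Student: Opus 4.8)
The two assertions rest on one common feature of small characteristic: a Weierstrass equation has $c_4=a_1^4$ when $\chark{k}=2$ and $c_4=b_2^2$ (with $b_2=a_1^2+4a_2$) when $\chark{k}=3$. Hence if $W$ is minimal at a point $s$ of good reduction, then $\nu_s(j)=\nu_s(c_4^3/\Delta_W)=3\nu_s(c_4)$ is divisible by $12$ (resp.\ by $6$); in particular $j$ has no pole at a point of good reduction, so every pole of $j$ is a point of bad reduction. Throughout I fix a globally minimal Weierstrass equation $W$ for $E$ over $k[T]=\myO(\al)$, so that $E$ has good reduction at all finite points precisely when $\Delta_W\in k[T]^\times=\ku$.

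For (a) I would argue by contradiction: assume $j(E)\notin k$ but $E$ has at most one point of bad reduction. After moving that point (if any) to $\infty$ by a fractional linear change of $T$, $E$ has good reduction at every finite point, so $\Delta_W=\gamma\in\ku$ and therefore $j=\lambda a_1^{12}$ (if $\chark{k}=2$) or $j=\lambda b_2^{6}$ (if $\chark{k}=3$) for some $\lambda\in\ku$; the relevant coefficient $f\in\{a_1,b_2\}\subseteq k[T]$ is nonconstant because $j\notin k$. The plan is then to substitute $\Delta_W=\gamma$ into the explicit discriminant formula and run a unique-factorization descent. In characteristic $2$ one has $\Delta_W=a_1^4b_8+a_3^4+a_1^3a_3^3$, which rearranges to $a_1^3\,(a_1b_8+a_3^3)=(a_3-\gamma^{1/4})^4$; reading this modulo $a_1$ gives $\gcd(a_1,a_3)=1$, so the two factors on the left are coprime, hence each is a perfect fourth power (units are fourth powers since $k$ is algebraically closed), forcing $a_1$ to be a fourth power; substituting back yields an equation of the very same shape for a fourth root of $a_1$, and iterating shows $a_1$ is a $4^n$-th power for all $n$, so $a_1\in k$ — contradicting $j\notin k$. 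Characteristic $3$ is parallel: $\Delta_W=-b_2^3b_6+b_2^2b_4^2+b_4^3$ rearranges to $b_2^2b_8=(b_4-\gamma^{1/3})^3$, one gets $\gcd(b_2,b_4)=\gcd(b_2,b_8)=1$, and the same descent forces $b_2$ to be a $3^n$-th power for all $n$, so $b_2\in k$, again a contradiction. Thus $E$ has at least two bad points.

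For (b), assume $E$ has good reduction everywhere; then $\Delta_W\in\ku$, so the descent above (which uses only $\Delta_W\in\ku$) shows $a_1\in k$ (resp.\ $b_2\in k$), and in particular $j(E)\in k$. I would next rescale to a normal form — say $a_1=1$ and then, by admissible translations, $a_3=a_4=0$ when $\chark{k}=2$ and $j\neq0$, with the analogous reductions in the remaining cases, using that $k$ is algebraically closed and that the automorphism group of the relevant constant curve (of order $2$, $12$ or $24$) acts through changes of variable. The identity $\Delta_W\in\ku$ then forces the ``rigid'' coefficients to be constants; e.g.\ in the case above one is reduced to $y^2+xy=x^3+a_2x^2+\gamma$ with $a_2\in k[T]$, the only remaining freedom being $a_2$ modulo Artin--Schreier. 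To conclude I would reduce the remaining twist parameter to lowest degree and show that, if it is still nonconstant, then passing to an integral model at $\infty$ via $x=T^{2m}x'$, $y=T^{3m}y'$ multiplies $\Delta$ by $T^{12m}$ with $m\geq1$, and — checking via Tate's algorithm, the residue characteristic being $2$ or $3$ — this model is minimal, so $\nu_\infty(\Delta_{\min})\geq12>0$. That contradicts good reduction at $\infty$, so $E$ is constant; equivalently a nonconstant $E$ has at least one bad point.

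For sharpness I would exhibit: $y^2+xy=x^3+Tx^2+1$ (characteristic $2$, $j=1$) and $y^2=x^3+x+T$ (characteristic $3$, $j=0$), nonconstant isotrivial curves whose only bad point is $\infty$, matching (b); and $y^2+xy=x^3+T$ (characteristic $2$, $j=1/T$) and $y^2=x^3+Tx^2+1$ (characteristic $3$, $j=-T^3$), with bad reduction exactly at $0$ and $\infty$, matching (a). In each case the bad finite points are read off from $\Delta_W$ and the behaviour at $\infty$ from the model above. I expect the descent underlying (a) to be the real obstacle: it is genuinely characteristic-specific, it leans on $k=k^{1/p}$ and $\mu_p(k)=1$, and in the case $j=0$ of (b) it must be combined with the bookkeeping of twists of curves with large automorphism group, exactly where the residue-characteristic-free minimality criterion fails and Tate's algorithm has to be run by hand.
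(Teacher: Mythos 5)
Your part (a) is correct and is in substance the paper's own argument: the rearrangement $a_1^3(a_1b_8+a_3^3)=(a_3+\gamma^{1/4})^4$ (resp.\ $b_2^2b_8=(b_4-\gamma^{1/3})^3$), the coprimality read off modulo $a_1$ (resp.\ modulo $b_2$), and the iteration ``$a_1$ is a $4^n$-th power for every $n$, hence constant'' are exactly the induction of Lemma \ref{proof_23_ldisc} (there phrased as a descent on $\sup\{s:Q_1\in k[T]^{p^s}\}$ for equations $Y^{p^r}-Y^nQ_1^m-Q_1^{m+1}Q-c=0$), applied with $Q_1=a_1$, $Y=a_3$ in characteristic $2$ and $Q_1=a_2$, $Y=a_4$ in characteristic $3$. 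Your sharpness examples also match the paper's.

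The genuine gap is in part (b), at the step ``checking via Tate's algorithm \dots this model is minimal, so $\nu_\infty(\Delta_{\min})\geq 12$.'' For the integral model at $\infty$ produced by $x=T^{2m}x'$, $y=T^{3m}y'$ one has $\nu_\infty(\Delta)\geq 12$ \emph{and} $\nu_\infty(c_4)\geq 4$ simultaneously, so no general minimality criterion applies; in residue characteristic $2$ or $3$ the minimality of that model is precisely equivalent to the assertion you are trying to prove (bad reduction at $\infty$). Settling it means deciding whether the residual twist parameter becomes trivial over $k((T^{-1}))$ --- an Artin--Schreier computation when $j\neq 0$ in characteristic $2$, and, in the $j=0$ cases, a computation against the substitutions $(u,r)$ resp.\ $(u,s,t)$ coming from automorphism groups of order $12$ and $24$ --- and none of this is carried out; you yourself flag it as ``the real obstacle'' but leave it as a plan. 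The paper avoids Tate's algorithm entirely: assuming good reduction everywhere, it takes globally minimal (hence smooth) models of the normal form over both $k[T]$ and $k[T^{-1}]$, uses Gauss's lemma to show the transition data $r$, $s$, $t$ lie in $k[T,T^{-1}]$, and splits each into its $k[T]$-part and its $T^{-1}k[T^{-1}]$-part; applying the $k[T]$-part alone to $W$ already yields a constant Weierstrass equation. If you wish to keep the local-at-$\infty$ strategy you must actually execute the twist/minimality analysis in each of the four normal forms of Proposition \ref{proof_23_weq}; otherwise the two-chart comparison is the cleaner route.
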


\section[Proofs]{Proofs}
\subsection[Proofs for characteristic different from $2$ and $3$]{Proofs for characteristic different from 2 and 3}
In this section $k$ denotes an algebraically closed field. We prove the results in characteristic different from $2$ and $3$ by applying Mason's $abc$-inequality to the discriminant of a globally minimal Weierstrass equation:

\begin{defin}
For $x\in k(T)$ we call
\begin{equation}\label{eqd1_1} 
	H(x):=\sum_{\nu} -\min(0,\nu(x))
\end{equation}
where the sum is over all canonical valuations on $k(T)$, the \underline{height} of $x$. For more detailed background we refer to \cite[ch. I,VI]{mas}.
\end{defin}

\begin{lemma}[Mason's ABC-inequality]\label{mason}
	Let $\mathfrak{V}$ be a finite set of canonical valuations $\nu$ on $k(T)$.
	Let $\gamma_1,\gamma_2,\gamma_3 \in k(T)^\times$ with $\gamma_1+\gamma_2+\gamma_3=0$ and such that
	$\nu(\gamma_1)=\nu(\gamma_2)=\nu(\gamma_3)$ for each valuation $\nu$ not in $\mathfrak{V}$.
		\begin{enumerate}[leftmargin=*]
			\item If $\text{char} (k)=0$, we have either $\frac{\gamma_1}{\gamma_2}\in k$ or $$H\left(\frac{\gamma_1}{\gamma_2}\right)\leq \left|\mathfrak{V}\right|-2$$
			\item If $\text{char} (k)=p>0$, we have either $\frac{\gamma_1}{\gamma_2}\in k(T)^p$ or $$H\left(\frac{\gamma_1}{\gamma_2}\right)\leq \left|\mathfrak{V}\right|-2$$
		\end{enumerate}
	Here $H$ denotes the height function on $k(T)$. Compare \cite{mas}[ch. I,VI].
\end{lemma}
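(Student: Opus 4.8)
The plan is to prove the (equivalent) contrapositive of the dichotomy. Set $f:=\gamma_1/\gamma_2\in k(T)^\times$. Because $k$ is algebraically closed, hence perfect, the kernel of the derivation $\tfrac{d}{dT}$ on $k(T)$ is $k$ if $\operatorname{char}k=0$ and $k(T)^p$ if $\operatorname{char}k=p$; so $\tfrac{df}{dT}=0$ is precisely the excluded alternative, and it is enough to show: if $\tfrac{df}{dT}\neq0$, then $H(f)\le|\mathfrak{V}|-2$. First I would reformulate the hypothesis on $\mathfrak{V}$. Dividing $\gamma_1+\gamma_2+\gamma_3=0$ by $\gamma_2$ gives $1+f=-\gamma_3/\gamma_2$, so both $\nu(f)=\nu(\gamma_1)-\nu(\gamma_2)$ and $\nu(1+f)=\nu(\gamma_3)-\nu(\gamma_2)$ vanish for every canonical valuation $\nu\notin\mathfrak{V}$. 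Hence the set $Z$ of zeros of $f$, the set $Z'$ of zeros of $1+f$, and the set $P$ of poles of $f$ (which is also the set of poles of $1+f$) are pairwise disjoint subsets of $\mathfrak{V}$, so $|Z|+|Z'|+|P|\le|\mathfrak{V}|$; moreover $H(f)=\deg(f)_\infty=\sum_{\nu\in P}(-\nu(f))$.

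The tool I would use is the meromorphic differential on $\pl$
\[
\eta\;:=\;\frac{df}{f}-\frac{df}{1+f}\;=\;\frac{f'}{f(1+f)}\,dT\;=\;d\log\!\left(\frac{\gamma_1}{\gamma_3}\right),
\]
which is nonzero precisely because $\tfrac{df}{dT}\neq0$. Since $\pl$ has genus $0$, $\deg\operatorname{div}(\eta)=-2$, that is $\deg(\eta)_0-\deg(\eta)_\infty=-2$; I would bound $\deg(\eta)_\infty$ from above and $\deg(\eta)_0$ from below and play the two against each other.

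For the poles: $\gamma_1/\gamma_3=-f/(1+f)$ has a zero along $Z$, a pole along $Z'$, and is a unit at every other point — in particular along $P$, where $f$ and $1+f$ both have poles of the same order. A logarithmic differential has at most simple poles, located among the zeros and poles of its argument; so $(\eta)_\infty$ is a reduced divisor supported on $Z\cup Z'$ and $\deg(\eta)_\infty\le|Z|+|Z'|$. For the zeros: fix $\nu\in P$ and put $c:=-\nu(f)=-\nu(1+f)\ge1$. Then $\gamma_1/\gamma_3+1=-\gamma_2/\gamma_3$ has valuation $c$ at $\nu$ while $\gamma_1/\gamma_3$ is a unit there, so writing $\gamma_1/\gamma_3=-1+t^{c}w$ with $t$ a uniformizer at $\nu$ and $w$ a unit and differentiating shows $\operatorname{ord}_\nu(\eta)\ge c-1$. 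Summing over $P$, $\deg(\eta)_0\ge\sum_{\nu\in P}(c-1)=H(f)-|P|$.

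Feeding these into $\deg(\eta)_0-\deg(\eta)_\infty=-2$ gives $H(f)-|P|\le|Z|+|Z'|-2$, hence
\[
H\!\left(\frac{\gamma_1}{\gamma_2}\right)=H(f)\;\le\;|Z|+|Z'|+|P|-2\;\le\;|\mathfrak{V}|-2 .
\]
The step I expect to be the main obstacle is the local estimate $\operatorname{ord}_\nu(\eta)\ge c-1$ at the poles of $f$: one should carry out the differentiation in the local uniformizer rather than in $T$ (which matters at the point at infinity), and in characteristic $p$ the subcase $p\mid c$ needs separate treatment — this is exactly where the hypothesis $\gamma_1/\gamma_2\notin k(T)^p$ enters, through $\eta\neq0$, to rule out the degeneracy $w'=0$. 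The same computation can also be packaged as Riemann--Hurwitz for the separable degree-$H(f)$ morphism $-\gamma_1/\gamma_3\colon\pl\to\pl$, whose fibres over $0$, $\infty$, $1$ are $Z$, $Z'$, $P$: then $-2=-2H(f)+\deg R$ with ramification divisor satisfying $\deg R\ge 3H(f)-(|Z|+|Z'|+|P|)$, which gives the same bound.
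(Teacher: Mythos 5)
Your proof is correct, and it is genuinely more than what the paper offers: the paper does not prove Lemma \ref{mason} at all, but simply cites Mason's book. Your reduction of the dichotomy to ``$df/dT\neq 0$'' via perfectness of $k$, the observation that $Z$, $Z'$, $P$ are pairwise disjoint subsets of $\mathfrak{V}$, and the two-sided estimate on $\operatorname{div}(\eta)$ for the logarithmic differential $\eta=d\log(\gamma_1/\gamma_3)$ against $\deg\operatorname{div}(\eta)=-2$ all check out, including the characteristic-$p$ subtleties (the local bound $\operatorname{ord}_\nu(\eta)\geq c-1$ only improves when $p\mid c$, and $\eta\neq 0$ is exactly the non-degeneracy you need). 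In substance this is the standard derivative/height proof of Mason--Stothers, which is what Mason's cited Lemmas contain, repackaged as a statement about a differential on $\pl$; your closing remark correctly identifies it as Riemann--Hurwitz for $-\gamma_1/\gamma_3$ in disguise. It is worth noting the mild irony that the paper's introduction advertises an ``elementary'' route avoiding Beauville's Riemann--Hurwitz argument, yet the black box it relies on is proved by essentially that argument --- your write-up makes this transparent, which is a virtue rather than a defect. The only presentational caveat: your divisor-theoretic language presupposes identifying the canonical valuations of $k(T)$ with the closed points of $\pl$ and knowing $\deg K_{\pl}=-2$ in every characteristic; if one wanted to stay strictly at the level of the paper's toolkit, one would rewrite the same computation with $\nu(f')$ in place of $\operatorname{ord}_\nu(\eta)$, as Mason does.
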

\begin{proof}
	see \cite[P.14 Lemma 2; P.97 Lemma 10]{mas}
\end{proof}

\begin{lemma}\label{lemma2}
Let $k$ be an algebraically closed field of characteristic different from $2$ and $3$.
For all $A,B\in \kTf$ with $A^3-B^2\in \kTf^\times$ we have $A,B \in \kTf^\times \cup \{0\}$.
\end{lemma}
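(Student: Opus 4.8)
The plan is to feed the hypothesis $A^3-B^2\in\kTf^\times$ into Mason's inequality (Lemma~\ref{mason}) applied to the triple $\gamma_1=A^3$, $\gamma_2=-B^2$, $\gamma_3=B^2-A^3$, which sums to $0$ and whose third term $\gamma_3=-(A^3-B^2)$ is a unit by hypothesis. First I would clear away the degenerate cases: if $A=0$ (or $B=0$) then the surviving one of $A^3$, $B^2$ is a unit, and a power $X^m$ with $m\geq 1$ of an element $X\in\kTf$ is a monomial only when $X$ itself is (compare the smallest and the largest exponent occurring in $X$); so the claim holds and we may assume $A,B\in\kTf\setminus\{0\}$, in which case all three $\gamma_i$ lie in $k(T)^\times$. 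The reformulation that drives everything: since $\kTf^\times=\{cT^m:c\in\ku,\,m\in\mathbb{Z}\}$ and $k$ is algebraically closed, a nonzero element of $\kTf$ is a monomial precisely when it has no zero on $\ku=\spec{\kTf}$; hence it suffices to show that neither $A$ nor $B$ vanishes at any $\alpha\in\ku$.

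Next I would extract the combinatorial input. Since $\gamma_3$ is a unit it has no zero on $\ku$, so if $A(\alpha)=0$ for some $\alpha\in\ku$ then $\nu_\alpha(\gamma_1)=3\nu_\alpha(A)>0=\nu_\alpha(\gamma_3)$, which via $\gamma_1+\gamma_2+\gamma_3=0$ forces $\nu_\alpha(\gamma_2)=0$, i.e. $B(\alpha)\neq 0$. Thus the zero sets of $A$ and of $B$ on $\ku$ are disjoint; call their sizes $r$ and $s$. The same bookkeeping shows that the canonical valuations at which $\nu(\gamma_1),\nu(\gamma_2),\nu(\gamma_3)$ fail to be all equal lie among $\nu_0$, $\nu_\infty$, and the valuations at the zeros of $AB$ on $\ku$, so one may take the finite set $\mathfrak{V}$ of Lemma~\ref{mason} with $|\mathfrak{V}|=r+s+2$.

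The main step is to apply Lemma~\ref{mason} to $(\gamma_1,\gamma_2,\gamma_3)$ and to $(\gamma_2,\gamma_1,\gamma_3)$ with this $\mathfrak{V}$. Since $k$ (resp. $k(T)^p$) is a field, either both applications land in the exceptional alternative $A^3/B^2\in k$ (resp. $\in k(T)^p$), or both produce the height bound. In the height case, write $A$ as a unit times $\prod_i(T-\alpha_i)^{e_i}$ and $B$ as a unit times $\prod_j(T-\beta_j)^{f_j}$ with $\alpha_i,\beta_j\in\ku$ and — by the disjointness just proved — all distinct. Counting poles gives $H(A^3/B^2)\geq\sum_j 2f_j\geq 2s$ and $H(B^2/A^3)\geq\sum_i 3e_i\geq 3r$, whereas Mason bounds each of these heights by $|\mathfrak{V}|-2=r+s$. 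Hence $2s\leq r+s$ and $3r\leq r+s$, i.e. $s\leq r$ and $2r\leq s$, which forces $r=s=0$; then $A$ and $B$ are monomials by the reformulation above.

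It remains to treat the exceptional alternative, which I expect to be the genuinely delicate point. In characteristic $0$ it is immediate: $A^3=\lambda B^2$ with $\lambda\in\ku$, where $\lambda\neq 1$ (otherwise $A^3-B^2=0$ is not a unit), so $B^2=(A^3-B^2)/(\lambda-1)$ is a unit and hence so is $A^3$, whence $A,B$ are monomials. In characteristic $p$ (necessarily $p>3$) one only obtains $A^3/B^2\in k(T)^p$, and the subtlety is that this does \emph{not} by itself make $A$ and $B$ into $p$-th powers. Comparing factorizations and using $\gcd(p,6)=1$ does force $p\mid e_i$ and $p\mid f_j$, so $A=c_AT^{m_A}P^p$ and $B=c_BT^{m_B}Q^p$ with $c_A,c_B\in\ku$, $m_A,m_B\in\mathbb{Z}$, $P,Q\in k[T]$; differentiating $A^3-B^2=cT^n$, using $(P^p)'=(Q^p)'=0$, and clearing $T$ yields $(3m_A-n)A^3=(2m_B-n)B^2$. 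If $3m_A-n\neq 0$ this again gives $A^3/B^2\in\ku$ and one finishes as in characteristic $0$; otherwise $n=3m_A=2m_B$ and $c_A^3P^{3p}-c_B^2Q^{2p}=c$, so — taking a $p$-th root (using that $k$ is perfect and that $x\mapsto x^p$ is additive) and rescaling — one reduces to the polynomial statement $U^3-V^2=e$ with $U,V\in k[T]$ and $e\in\ku$. For that, factor $V^2=\prod_{a=1}^3(U-\rho_a)$ over the three distinct cube roots $\rho_a$ of $e$; the pairwise coprime factors $U-\rho_a$ are each a constant times a square $S_a^2$, and $S_a^2-S_b^2=\rho_b-\rho_a\in\ku$ forces $S_a-S_b$ and $S_a+S_b$ — hence $S_a$, hence $U$ and then $V$ — to be constant. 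Thus $P,Q$ are constant, $A,B$ are monomials, and the plan is complete; the characteristic-$p$ exceptional alternative, bridged by this differentiation trick and the auxiliary polynomial case, is the only step I expect to require real care.
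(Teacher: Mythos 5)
Your main step is correct and is essentially the paper's own argument: the disjointness of the zero sets, the choice of $\mathfrak{V}$ with $|\mathfrak{V}|=r+s+2$, the lower bounds $2s$ and $3r$ for the two heights against Mason's bound $r+s$, and the conclusion $r=s=0$ all check out, and your characteristic-$0$ exceptional case ($\lambda\neq 1$, so $B^2=(A^3-B^2)/(\lambda-1)$ is a unit) is in fact slicker than the paper's. The gap is in the characteristic-$p$ exceptional case, at the sentence ``otherwise $n=3m_A=2m_B$''. The identity $(3m_A-n)A^3=(2m_B-n)B^2$ obtained by differentiating holds in $k[T,T^{-1}]$, so the integer coefficients $3m_A-n$ and $2m_B-n$ enter only through their residues mod $p$; the correct dichotomy is on $3m_A-n\bmod p$, and in the second branch you learn only $3m_A\equiv 2m_B\equiv n\pmod p$, not the integer equalities. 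Since $n=\min(3m_A,2m_B)$ whenever $3m_A\neq 2m_B$, the surviving scenario is, say, $n=3m_A$ and $2m_B=3m_A+\ell p$ with $\ell\geq 1$ (e.g.\ $p=5$, $m_A=1$, $m_B=4$). Dividing $A^3-B^2=cT^n$ by $T^n$ and extracting $p$-th roots then leaves $\alpha P^3-\beta T^{\ell}Q^2=\gamma$, which for odd $\ell$ is \emph{not} of the form $U^3-V^2=e$, so your auxiliary polynomial lemma does not apply as stated and the plan does not close.

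This stray power of $T$ is exactly the difficulty the paper's proof is engineered around: it multiplies through by $T^m$ with $2\mid m$ and $3\mid(m+n)$ so that the leftover $T$-powers are absorbed into the cube and the square, and it re-enters the Mason case by choosing $s$ maximal with $A^3/B^2\in k(T)^{p^s}$. Your route can probably be salvaged with your own coprime-factor trick — in $\alpha\prod_a(P-\rho_a)=\beta T^{\ell}Q^2+{}$(const shuffled appropriately), at most one of the three pairwise coprime factors $P-\rho_a$ can be divisible by $T$, so at least two are constants times squares and the identity $S_a^2-S_b^2=\rho_b-\rho_a\in\ku$ still forces $P\in k$ — but the symmetric case $3m_A>2m_B$ leads instead to $\alpha T^{\ell}P^3-\beta Q^2=\gamma$ and needs a further layer of the same kind of argument (factoring $\beta Q^2+\gamma$ into two coprime factors, each a constant times a power of $T$ times a cube, and then a ``difference of cubes is constant'' step). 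So the endgame is repairable, but as written the assertion $n=3m_A=2m_B$ is an unjustified leap from congruence mod $p$ to equality of integers, and the cases it silently discards are precisely the ones that carry the real content.
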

\begin{proof}

	In the following we may assume $AB\neq0$ since otherwise we are done.
	Let $A,B \in \kTf$ such that $A^3-B^2=aT^l$ for $a\in k^\times,l\in\mathbb{Z}$.
	For the polynomials 
	\begin{equation*}
		q_A:=\frac{A}{T^{\nu_T(A)}}, \quad q_B:=\frac{B}{T^{\nu_T(B)}}
	\end{equation*}
	let us define:
	\begin{equation*}
		\begin{split}
			\mathfrak{V}_A:= & \left\{\nu_g : g\in \text{support}(q_A)  \right\}\\
			\mathfrak{V}_B:= & \left\{\nu_g : g\in \text{support}(q_B) \right\}\\
		\end{split}
	\end{equation*}
	Since $A$ and $B$ are coprime in $\kTf$ we see
 	\begin{equation}\label{eq2_4}
		\mathfrak{V}_A \cap \mathfrak{V}_B=\emptyset
	\end{equation}
	and for $\mathfrak{V}:= \mathfrak{V}_A \cup \mathfrak{V}_B \cup  \left\{\nu_\infty,\nu_T\right\}$ we have:
	\begin{equation}\label{eq2_1}
		\left|\mathfrak{V}\right| \leq \deg q_A + \deg q_B + 2
	\end{equation}
	The definition of the height function gives the following estimates:

	\begin{align}
		H\left(\frac{A^3}{B^2}\right)& \geq \sum_{\nu \in \mathfrak{V}_B} 
		 -\mymin{0,\nu\left(A^3\right)-\nu\left(B^2\right)}= 2 \deg q_B \label{eq2_2B} \\
		H\left(\frac{B^2}{A^3}\right)&\geq \sum_{\nu \in \mathfrak{V}_A} 
		 -\mymin{0,\nu\left(B^2\right)-\nu\left(A^3\right)}=3 \deg q_A \label{eq2_2A}
	\end{align}
	\par
	\textit{(1)} Now we first suppose \emph{[$\text{char}(k)=0$ and $\frac{A^3}{B^2},\frac{B^2}{A^3}\notin k$]} or \emph{[$\text{char}(k)=p>0$ and $\frac{A^3}{B^2},\frac{B^2}{A^3}\notin k(T)^p$]}, so that we may apply Mason's inequality:
	\begin{equation}\label{eq2_5AB}
 			H\left(\frac{B^2}{A^3}\right)\leq	 \left|\mathfrak{V}\right|-2, \quad H\left(\frac{A^3}{B^2}\right)\leq	 \left|\mathfrak{V}\right|-2
 	\end{equation}
	We get
	\begin{equation*}
	3\deg q_A + 2 \deg q_B  \stackrel{\eqref{eq2_2B},\eqref{eq2_2A},\eqref{eq2_5AB}}{\leq}	2(\left|\mathfrak{V}\right|-2) \stackrel{\eqref{eq2_1}}{\leq} 2\deg q_A +2\deg q_B 
	\end{equation*}
	as well as:
	\begin{equation*}
	2 \deg q_B  \stackrel{\eqref{eq2_2B}}{\leq} H\left(\frac{A^3}{B^2}\right)
	\stackrel{\eqref{eq2_5AB}}{\leq}
	 (\left|\mathfrak{V}\right|-2) \stackrel{\eqref{eq2_1}}{\leq} \deg q_A +\deg q_B
	\end{equation*}
	Together this yields $\deg q_B \leq \deg q_A\leq0$ and from the definition of $q_A$ and $q_B$ we get $A,B\in \kTf^\times$ as desired.\par
	\textit{(2)} If we have \emph{[$\text{char}(k)=0$ and $\frac{A^3}{B^2},\frac{B^2}{A^3}\in k$]}, we get $\frac{q_A^3}{q_B^2}\in k$. And since $q_A$ and $q_B$ are coprime as polynomials we have $q_A,q_B\in k$.\par
	\textit{(3)} The remaining case \emph{[$\text{char}(k)=p>0$ and $\frac{A^3}{B^2},\frac{B^2}{A^3}\in k(T)^p$]} may be reduced to the first case.
	Let $s:=\sup \left(s: \frac{A^3}{B^2}\in k(T)^{p^s}\right)$. We may assume $s<\infty$ since otherwise we would have $\frac{A^3}{B^2}\in k$ and therefore $A^3,B^2 \in \kTf^\times$.
	Since we have $\frac{A^3}{B^2}\in k(T)^{p^s}$ and since $p$ is different from $2$ and $3$ there are $\tilde{q_A},\tilde{q_B}$ and some $n\geq0$ such that:
	\begin{equation*}
	  \tilde{q_A}^{p^s}=q_A,\text{ }\tilde{q_B}^{p^s}=q_B \text{ and } 3\nu_T(A)-2\nu_T(B)=np^s
	\end{equation*}
	After dividing $A^3-B^2=aT^l$ by $T^{2\nu_T(B)}$ we have:
	\begin{equation*}
	  (\tilde{q_A}^3 T^{n}-\tilde{q_B}^2)^{p^s} = aT^{l-2\nu_T(B)}
	\end{equation*}
	Multiplying $\tilde{q_A}^3 T^{n}-\tilde{q_B}^2\in \kTf^\times$ by an appropriate $T^m$ such that $3|(m+n)$ and $2|m$
	yields:
	\begin{equation*}
	 (\tilde{q_A}T^{\frac{n+m}{3}})^3 -(\tilde{q_B}T^{\frac{m}{2}})^2\in \kTf^\times
	\end{equation*}
	By the choice of $s$ we have $\frac{\tilde{q_A}^3 T^{n+m}}{\tilde{q_B}^2 T^m}=\frac{\tilde{q_A}^3 T^{n}}{\tilde{q_B}^2}\notin k(T)^p$ and thus the first case applies.
\end{proof}

\begin{korollar}\label{korollar1}
	For $l\in\mathbb{Z}$ and $u\in k^\times$ consider $A,B\in \kTf$ with $A^3-B^2=uT^l$.
		\begin{enumerate}[leftmargin=*]
			\item If $AB\neq 0$, then $(A,B)\in\left\{ (a T^{2n},b T^{3n}):a,b\in k^\times n\in\mathbb{Z} \right\}$
			\item If $A=0$, then $(A,B)\in\left\{ (0,b T^{n}):b\in k^\times, n\in\mathbb{Z}\right\}$
			\item If $B=0$, then $(A,B)\in\left\{ (a T^{n},0):a\in k^\times ,n\in\mathbb{Z}\right\}$
		\end{enumerate}	
\end{korollar}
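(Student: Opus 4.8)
The plan is to derive Corollary~\ref{korollar1} directly from Lemma~\ref{lemma2} by a short bookkeeping argument. Lemma~\ref{lemma2} already tells us that $A^3-B^2\in\kTf^\times$ forces $A,B\in\kTf^\times\cup\{0\}$. So the only remaining task is to unwind what it means for a Laurent polynomial to lie in $\kTf^\times$: an element of $\kTf$ is a unit precisely when it is a monomial $cT^m$ with $c\in k^\times$ and $m\in\mathbb{Z}$. Granting this, parts (b) and (c) are immediate: if $A=0$ then $B^2=-uT^l\in\kTf^\times$, so $B\in\kTf^\times$, hence $B=bT^n$ for some $b\in k^\times$, $n\in\mathbb{Z}$; and symmetrically for $B=0$.

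For part (a), I would argue as follows. Assume $AB\neq0$. Lemma~\ref{lemma2} gives $A,B\in\kTf^\times$, so write $A=aT^i$ and $B=bT^j$ with $a,b\in k^\times$ and $i,j\in\mathbb{Z}$. Then $A^3-B^2=a^3T^{3i}-b^2T^{2j}$, and we are told this equals $uT^l\in\kTf^\times$, i.e.\ it is again a monomial. A difference of two nonzero monomials $a^3T^{3i}-b^2T^{2j}$ is a monomial if and only if the two exponents coincide, $3i=2j$: if $3i\neq 2j$ the expression is a genuine binomial with two distinct nonzero terms, which is not a unit in $\kTf$ (it has a non-monomial support, or equivalently it vanishes at some nonzero point of $\mathbb{A}^1_k$ unless the coefficients cancel, but they cannot cancel since the monomials are distinct). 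Hence $3i=2j$, and since $\gcd(2,3)=1$ this forces $i=2n$ and $j=3n$ for a common integer $n:=i/2=j/3\in\mathbb{Z}$. Therefore $(A,B)=(aT^{2n},bT^{3n})$ with $a,b\in k^\times$, which is exactly the claimed form.

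The only genuinely substantive point, and the one I would state carefully, is the elementary fact that $a^3T^{3i}-b^2T^{2j}\in\kTf^\times$ implies $3i=2j$. This is where I expect a referee might want a line of justification rather than a bald assertion; everything else is formal substitution. One clean way to phrase it: after multiplying by a suitable power of $T$ we may assume both exponents are $\ge 0$, so we are asking when a polynomial $a^3T^{3i}-b^2T^{2j}\in k[T]$ is a unit in $\kTf$, equivalently when its only irreducible factors are $T$; factoring out the largest common power of $T$ leaves $a^3T^{\alpha}-b^2$ or $a^3-b^2T^{\beta}$ with $\alpha,\beta\ge 0$, and such an expression is a power of $T$ only when it is a nonzero constant, i.e.\ when $\alpha=0$ (resp.\ $\beta=0$), which is precisely $3i=2j$. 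With that lemma in hand the corollary is a two-line deduction, so the write-up should be short.
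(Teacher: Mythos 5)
Your argument is correct and follows the same route as the paper: apply Lemma~\ref{lemma2} to write $A=aT^i$, $B=bT^j$, then compare the two sides of $a^3T^{3i}-b^2T^{2j}=uT^l$ using the linear independence of the monomials $\{T^i\}_{i\in\mathbb{Z}}$ over $k$ to force $3i=2j$, hence $i=2n$, $j=3n$. The paper compresses the coefficient comparison into one sentence; your write-up just makes that step explicit.
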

\begin{proof}
	By Lemma \ref{lemma2} we have $A,B\in \kTf^\times\cup \{0\}$, say $A=a T^{l_A},B=b T^{l_B}$ with $a,b\in k$, $l_a,l_B\in\mathbb{Z}$. Thus we have an equation of the form:
	$$a^3 T^{3l_A}-b^2 T^{2l_B}=u T^l$$
	Now the corollary follows from the fact that $\{T^i\}_{i\in \mathbb{Z}}$ is a $k$-basis for $\kTf$.
\end{proof}

\begin{korollar}\label{korollar2}
	Let $A,B\in k[T]$ satisfying $A^3-B^2\in k^\times$. Then we have $A,B\in k$.
\end{korollar}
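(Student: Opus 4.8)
The plan is to read this off from Corollary~\ref{korollar1} with $l = 0$. Indeed $A, B \in k[T] \subseteq \kTf$, and the hypothesis says $A^3 - B^2 = uT^0$ for some $u \in k^\times$, so all three cases of that corollary apply verbatim; what remains is to exploit that $A$ and $B$ are genuine polynomials (not Laurent polynomials) together with the fact that $A^3 - B^2$ is a \emph{nonzero constant} rather than merely a monomial.

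First I would treat the degenerate cases $A = 0$ and $B = 0$. If $A = 0$, part (b) of the corollary gives $B = bT^n$ with $b \in k^\times$ and $n \in \mathbb{Z}$; since $B \in k[T]$ we must have $n \ge 0$, and the relation $-B^2 = -b^2 T^{2n} = u$ forces $2n = 0$, so $B = b \in k$. The case $B = 0$ is handled symmetrically via part (c) together with the relation $A^3 = u$, which forces the exponent of $T$ in $A$ to vanish.

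Finally, assume $AB \ne 0$. By part (a) we may write $A = aT^{2n}$ and $B = bT^{3n}$ with $a, b \in k^\times$ and $n \in \mathbb{Z}$, and $n \ge 0$ because $A, B \in k[T]$. Substituting into $A^3 - B^2 = u$ yields $(a^3 - b^2)T^{6n} = u$. The one point that needs a moment's care is excluding $n > 0$: were $a^3 = b^2$, the left-hand side would vanish, contradicting $u \in k^\times$; hence $a^3 \ne b^2$, and comparing the exponents of $T$ (using that $\{T^i\}_{i\in\mathbb{Z}}$ is a $k$-basis of $\kTf$) forces $6n = 0$. Therefore $n = 0$, i.e.\ $A = a \in k$ and $B = b \in k$, as claimed. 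In summary, there is no genuine obstacle here beyond bookkeeping: Corollary~\ref{korollar1} does all the structural work, and polynomiality plus the nonvanishing of the constant pin the exponent down to zero.
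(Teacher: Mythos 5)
Your proof is correct and follows the same route as the paper, which simply sets $l=0$ in Corollary~\ref{korollar1} and reads off the conclusion; you merely spell out the exponent bookkeeping (including the observation that $a^3\neq b^2$ forces $6n=0$) that the paper leaves implicit.
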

\begin{proof}
	Setting $l=0$ in Corollary \ref{korollar1} we see that the result even holds for $A,B\in \kTf$
\end{proof}

Using the two corollaries above we can prove the main results in all characteristics different from $2$ and $3$.

\begin{proof}[Proof of Theorem \ref{statement_1} (a)]
	We show that every elliptic curve $E/k(T)$ with bad reduction restricted to two points has $j(E)\in k$. 
Without loss of generality we may assume these points to be $\{0,\infty\}$, so let $E/k(T)$ be an elliptic curve with good reduction everywhere over $\spec{\kTf}$.
 Since we are in characteristic different from $2$ and $3$, we can choose for $E$ a globally minimal Weierstrass equation over $\spec{\kTf}$ of the form
		\begin{equation*}
			W:y^2=x^3-3Ax+2B
		\end{equation*}
		with $A,B\in k[T,T^{-1}]$. Since $E$ has good reduction everywhere over $\spec{\kTf}$, we have $\Delta_W=1728(A^3-B^2) \in k[T,T^{-1}]^\times$. But this is just possible for $(A,B)$ as in Corollary \ref{korollar1}. It is easy to see that $E$ has constant $j$-invariant by inserting the results from Corollary \ref{korollar1} into the formula $j(E)=12^3\frac{A^3}{A^3-B^2}$. \par
		If we remove three points we can just write down an elliptic curve with non-constant $j$-invariant and good reduction everywhere:
\end{proof}
		\begin{example}\label{proof_1ea}
			The elliptic curve $E/k(T)$ defined by the Weierstrass equation in Legendre form:
			\begin{equation*}
				W:\quad y^2=x(x-1)(x-T)
			\end{equation*}
			has $j$-invariant $j(E)=2^8\frac{(T^2-T+1)^3}{T^2(T-1)^2}$, so $E$ has non-constant $j$-invariant.
			From the discriminant $\Delta_W=16T^2(T-1)^2$ we see that $W$ is a globally minimal Weierstrass equation for $E$ over $\mathbb{P}^1_k\setminus \{ 0,1,\infty \}$ and that it has good reduction outside $\{ 0,1,\infty \}$.
		\end{example}		
	
\begin{proof}[Proof of Theorem \ref{statement_1} (b)]
		Let $E$ be an elliptic curve over $k(T)$. Since we are in characteristic different from $2$ and $3$ we can choose for $E$ a globally minimal Weierstrass equation over $\al=\spec{k[T]}$ of the form:
		\begin{equation*}
			W:y^2=x^3-3Ax+2B
		\end{equation*}
		with $A,B\in k[T]$. If $E$ has good reduction everywhere over $\mathbb{A}^1_k$, we have $\Delta_W=1728(A^3-B^2)\in k^\times$, but due to Corollary \ref{korollar2} this is just possible for $A,B$ constant. Thus $E$ is constant. The following example shows that there are non-constant elliptic curves, if we allow bad reduction in one more point.
\end{proof}
		
		\begin{example}\label{proof_1eb}
			The elliptic curve $E$ over $k(T)$ defined by
			\begin{equation*}
				W:\quad y^2=x^3-3 T^2 x
			\end{equation*}
			has $j$-invariant $j=1728$ and discriminant $\Delta_W=1728 T^6$. Thus $W$ is a globally minimal Weierstrass equation for $E$ over $\spec{\kTf}$ and has good reduction everywhere over $\pl\setminus \{0,\infty\}$ since $\Delta_W \in \kTf^{\times}$. It is easy to see that $E$ is non-constant: Suppose we had a Weierstrass equation
			\begin{equation*}
				y^2+a_1 xy+a_3 y=x^3+ a_2 x^2+a_4 x+a_6
			\end{equation*}
			with constant coefficients. Then a suitable substitution would give us a constant two-parameter Weierstrass equation
			\begin{equation*}
				W':\quad y'^2=x'^3+ ax'+b
			\end{equation*}
			with constant coefficients $a,b\in k$. Comparing the Weierstrass equations $W$ and $W'$ yields $-3 T^2= a u^4$ for some $u\in k(T)$, which is impossible.
		\end{example}

\subsection[Proofs for characteristics $2$ and $3$]{Proofs for characteristics 2 and 3}
In characteristics $2$ and $3$ the non-existence of elliptic curves $E$ with good reduction everywhere over $\al$ and non-constant $j$-invariant can be obtained by applying the following Lemma to a globally minimal Weierstrass equation of $E$.

\begin{lemma}\label{proof_23_ldisc}
	Let $k$ be an algebraically closed field of characteristic $p>0$.
	Let 
	\begin{equation}\label{eql2_1}
		f(Y)=Y^{p^r}-Y^n Q_1^m - Q_1^{m+1} Q -c \in \left(k[T]\right) [Y]
	\end{equation}
	with $Q_1,Q\in k[T]$, $Q_1$ non-constant, $c\in k^\times$, $r,m,n\geq 1$ integers with $m<p^r$ such that $m$ and $n$ are coprime to $p$. Then $f(Y)$ has no roots in $k[T]$.
\end{lemma}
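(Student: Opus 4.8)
The plan is to argue by a \emph{descent} on $\deg Q_1$. Assuming the lemma fails, choose among all polynomials of the shape \eqref{eql2_1} that \emph{do} admit a root in $k[T]$ one for which $\deg Q_1$ is minimal (a positive integer, as $Q_1$ is non-constant); I will produce from it another such polynomial with strictly smaller $\deg Q_1$, a contradiction. So suppose $h\in k[T]$ is a root of $f$. Since $k=\bar k$ we may write $c=d^{p^r}$ with $d\in k^\times$, and then $f(h)=0$ reads
\[
	(h-d)^{p^r}=h^{p^r}-c=Q_1^m\bigl(h^n+Q_1Q\bigr).
\]
If $h=d$ this forces $Q_1(d^n+Q_1Q)=0$, hence $Q_1\in k[T]^\times$, contradicting that $Q_1$ is non-constant; so assume $h\neq d$ from now on.

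I would first localise at each irreducible factor $\pi$ of $Q_1$, with $\pi$-adic valuation $\nu_\pi$. Reading $h^{p^r}=h^nQ_1^m+Q_1^{m+1}Q+c$ at $\pi$: if $\pi\mid h$ then all terms on the right except $c$ have positive $\pi$-valuation while $\nu_\pi(c)=0$, so the right-hand side is a $\pi$-unit while the left-hand side is not --- impossible. Hence $\pi\nmid h$, so $\nu_\pi(h^n+Q_1Q)=0$, and the displayed identity gives $p^r\nu_\pi(h-d)=m\,\nu_\pi(Q_1)$. Since $\gcd(m,p^r)=1$ this yields $p^r\mid\nu_\pi(Q_1)$ for every $\pi\mid Q_1$, so $Q_1=R^{p^r}$ for a non-constant $R\in k[T]$; moreover $R^m\mid h-d$, say $h=d+R^mg$ with $g\in k[T]$, $g\neq0$, and $g$ coprime to $R$.

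Substituting $h=d+R^mg$ into $f(h)=0$ and cancelling $R^{mp^r}$ turns the equation into $g^{p^r}=(d+R^mg)^n+R^{p^r}Q$. Writing $d^n=e^{p^r}$ ($e\in k^\times$) and using $(d+R^mg)^n-d^n=R^mg\bigl(nd^{n-1}+R^m(\cdots)\bigr)$ --- whose parenthesis is a $\pi$-unit for each $\pi\mid R$, since $\gcd(n,p)=1$ makes $n\neq0$ in $k$ --- exactly the previous computation now gives $p^r\mid\nu_\pi(R)$, so $R=R_1^{p^r}$ with $R_1$ non-constant and $R_1^m\mid g-e$, say $g=e+R_1^mg_1$ with $g_1$ coprime to $R_1$. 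Substituting once more and cancelling $R_1^{mp^r}$ leaves, after collecting terms,
\[
	g_1^{p^r}=nd^{n-1}e+nd^{n-1}R_1^mg_1+R_1^{m+1}W,\qquad W\in k[T],
\]
where $m\geq1$ and $p^r\geq m+1$ are what let me absorb the higher binomial terms and $R^{p^r}Q$ into the last summand. Finally, rescaling $g_1=\alpha Z$ with $\alpha^{p^r-1}=nd^{n-1}$ (possible since $k=\bar k$ and $p^r-1\geq1$) and dividing by $\alpha^{p^r}$ exhibits $Z=g_1/\alpha\in k[T]$ as a root of a polynomial of the shape \eqref{eql2_1}, now with $Q_1$ replaced by $R_1$, with $n$ replaced by $1$, with the same $m$ and $r$, and with constant $\alpha^{-p^r}nd^{n-1}e\in k^\times$. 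As $\deg R_1=\deg Q_1/p^{2r}<\deg Q_1$, this is the promised smaller instance, contradicting minimality.

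The step I expect to cost the most care is the second substitution: one must check that after replacing $g$ by $e+R_1^mg_1$ everything except the linear term $R_1^mg_1$ and the constant $nd^{n-1}e$ falls into $R_1^{m+1}k[T]$, and that the rescaling genuinely restores the shape \eqref{eql2_1} --- in particular that the new exponent $1$ on $Y$ is still coprime to $p$ and that $m<p^r$ persists, so that a bona fide smaller instance results. One also has to clear a couple of degenerate sub-cases ($g=e$, and $Q=0$) by the same valuation comparison, but those are routine.
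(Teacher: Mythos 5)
Your argument is correct and is essentially the paper's own proof: both establish, via coprimality/valuation at the irreducible factors of $Q_1$, that $Q_1$ must be a $p^r$-th power, then substitute $h=d+R^mg$ and descend (the paper inducts on the largest $s$ with $Q_1\in k[T]^{p^s}$ rather than on $\deg Q_1$, and absorbs the constant $nd^{n-1}$ into $Q_1$ rather than rescaling the root, but these are cosmetic differences). Your second extraction round is redundant --- after the first substitution you already have an instance of \eqref{eql2_1} with $Q_1$ replaced by $R$, exponent $n$ replaced by $1$, and $\deg R=\deg Q_1/p^r<\deg Q_1$, so one round per descent step suffices --- but this costs nothing.
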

\begin{proof}
	Define 
		\begin{eqnarray*}
			w :\quad k[T] & \rightarrow & \mathbb{Z} \cup \{\infty\} \\
			g & \mapsto & \sup\left\{s:g\in k[T]^{p^s}\right\} 
		\end{eqnarray*}
		(for non constant $g\in k[T]$ we have $w(g)<\infty$). We now prove the Lemma by induction over $w(Q_1)$.\par
		 As first case we consider \underline{$s:=w(Q_1)<r$:}
			Let $f$ be an arbitrary polynomial of the form \eqref{eql2_1} as given in the Lemma with $w(Q_1)<r$. Let us further assume we had a solution $A\in k[T]$ to $f(Y)=0$:
			\begin{equation*}\label{eql2_2}
				A^{p^r}-c=Q_1^m \left( A^n+Q_1 Q \right) \text{ in } k[T]
			\end{equation*}
			From this equation we can see that $Q_1$ is coprime to $A$ and thus $Q_1$ is also coprime to $A^n+Q_1 Q$. Since $A^{p^r}-c \in k[T]^{p^r}$ we have $Q_1^m \left( A^n+Q_1 Q \right)  \in k[T]^{p^r}$. Since $k$ is algebraically closed we obtain from $(Q_1,A^n+Q_1 Q)=1$, that $Q_1^m\in k[T]^{p^r}$. And since $m$ is coprime to $p$ we get $Q_1\in k[T]^{p^r}$ contradicting $w(Q_1)<r$.\par
			\underline{$s-1\rightarrow s$:}
			We may assume $s\geq r$ since $s<r$ has already been treated.
			Suppose we have proven the Lemma for all polynomials $Q_1$ with $w(Q_1)<s$. Now consider a polynomial $f$ of the form \eqref{eql2_1} as in the Lemma with $w(Q_1)=s$. Further assume we had an $A\in k[T]$ with $f(A)=0$:
			\begin{equation*}\label{eql2_3}
				A^{p^r}-c=Q_1^m \left( A^n + Q_1 Q\right)
			\end{equation*}
			Again since $(Q_1,A^n+Q_1 Q)=1$ and $(m,p)=1$ we have $Q_1 \in k[T]^{p^r}$. Let $\tilde{Q}_1\in k[T]$ with $\tilde{Q}_1^{p^r}=Q_1$, further let $\tilde{c}\in k$ such that $\tilde{c}^{p^r}=c$. Thus we have:
			\begin{equation}\label{eql2_4}
				\left( \frac{A-\tilde{c}}{\tilde{Q}_1^m} \right)^{p^r}=A^n+Q_1 Q
			\end{equation}
			Setting $g:=\frac{A-\tilde{c}}{\tilde{Q}_1^m}$ and inserting $A=\tilde{c}+g\tilde{Q}_1^m$ in \eqref{eql2_4} yields:
			\begin{align*}\label{eql2_5}
				g^{p^r}&=(\tilde{c}+g\tilde{Q}_1^m)^n+Q_1 Q\\
							&=\tilde{c}^n+n \tilde{c}^{n-1} g\tilde{Q}_1^m+ \underbrace{\sum_{i=2}^{n}\binom{n}{i}\tilde{c}^{n-i} g^i \tilde{Q}_1^{im}+\tilde{Q}_1^{p^r} Q}_{=:\tilde{Q}_1^{m+1}\tilde{Q} \text{ using } p^r>m}
			\end{align*}
			
			Thus we have a solution $g\in k[T]$ of the equation
			\begin{equation*}\label{eql2_7}
				0=Y^{p^r}-\underbrace{n \tilde{c}^{n-1}}_{=:c'} Y\tilde{Q}_1^m-\tilde{Q}_1^{m+1}\tilde{Q}-\tilde{c}^n
			\end{equation*}			
			which is of the form \eqref{eql2_1} as considered in the Lemma (the constant $c'$ doesn't matter since we can pull it into $\tilde{Q}_1$).
			And because of $w(\tilde{Q}_1)=w(Q_1)-r<w(Q_1)$ this solution contradicts the induction hypothesis.
\end{proof}

This Lemma may be used to prove Part (a) of Theorem \ref{statement_2}, i.e. every elliptic curve $E/k(T)$ with non-constant $j$-invariant has at least two points with bad reduction.

\begin{proof}[Proof of Theorem \ref{statement_2} (a)]
 	It suffices to show that there is no elliptic curve with non-constant $j$-invariant and with good reduction everywhere over $\mathbb{A}^1_k$.
	Suppose we had such an elliptic curve $E$.
	We first prove this for characteristic $2$.
	Let
	\begin{equation*}
		W:	y^2+a_1 xy+a_3 y=x^3+ a_2 x^2+a_4 x+a_6
	\end{equation*}
	$a_i \in k[T]$ be a globally minimal Weierstrass equation for $E$ over $\mathbb{A}^1_k$. We have:
	\begin{equation}\label{proof_2p1_1}
		\Delta_W=a_3^4+a_1^3 a_3^3 +a_1^4 \left(a_1^2a_6 + a_1 a_3 a_4 + a_2 a_3^2 +a_4^2 \right), \quad  j(E)=\frac{a_1^{12}}{\Delta_W}
	\end{equation}
	By our assumptions on $E$ we must have $\Delta_W \in k^\times$ and $j(E)\notin k$. Then  $j(E)\notin k$ implies $a_1\notin k$. But by \eqref{proof_2p1_1} $a_3 \in k[T]$ would be a solution to
	\begin{equation*}
		Y^4+a_1^3Y^3+a_1^4(a_1^2a_6+a_1 a_3 a_4+a_2 a_3^2+a_4^2)-\Delta_W=0
	\end{equation*}
	contradicting Lemma \ref{proof_23_ldisc}  since $a_1\notin k$.\par		
	In characteristic $3$ we can choose a globally minimal Weierstrass equation for $E$ over $\mathbb{A}^1_k$  of the form
	\begin{equation*}
		W:	y^2=x^3+a_2 x^2+a_4x+a_6 
	\end{equation*}
	$a_i \in k[T]$. Then $W$ has discriminant
	\begin{equation}\label{proof_2p1_2}
		\Delta_W=-a_4^3+a_2^2 a_4^2 -a_2^3a_6
	\end{equation}
	and $j$-invariant $j(E)=\frac{a_2^6}{\Delta_W}$. By our assumptions on $E$ we must have $\Delta_W \in k^\times$ as well as $j(E)\notin k$. And $j(E)=\frac{a_2^6}{\Delta_W}$ implies $a_2\notin k$. But by \eqref{proof_2p1_2} $a_4 \in k[T]$ would be a solution of
		\begin{equation*}
			Y^3-Y^2a_2^2+a_2^3a_6+\Delta_W=0
		\end{equation*}
		again contradicting Lemma \ref{proof_23_ldisc} since $a_2\notin k$.\par
	It remains to give an example of an elliptic curve with non-constant $j$-invariant and bad reduction at just two points:
\end{proof}

\begin{example}
\mbox{}
    \begin{enumerate}[leftmargin=*]
      \item		Let $k$ be an algebraically closed field of characteristic $2$. Then the elliptic curve $E/k(T)$ given by
	    \begin{equation*}
		    W: y^2+xy=x^3+x^2+T
	    \end{equation*}	
	    has discriminant $\Delta_W=T$ and hence $W$ is a globally minimal Weierstrass equation for $E$ with bad reduction restricted to $\{0,\infty \}$.
	    The $j$-invariant of $E$ is $1/T$ so $E$ has non-constant $j$-invariant.
      \item		Let $k$ be an algebraically closed field of characteristic $3$. Then the elliptic curve $E/k(T)$ given by
	    \begin{equation*}
		    W: y^2=x^3+Tx^2-T
	    \end{equation*}	
	    has discriminant $\Delta_W=T^4$ and hence $W$ is a globally minimal Weierstrass equation with bad reduction restricted to $\{0,\infty\}$.
	    The $j$-invariant of $E$ is $T^2$ so $E$ has non-constant $j$-invariant.
    \end{enumerate}
\end{example}

Although we do not have in general a two parameter Weierstrass equation $y^2=x^3+Ax+B$ in characteristics $2$ and $3$, we can nevertheless choose simple equations:
\begin{prop}\label{proof_23_weq}
Let $K$ be any field of characteristic $2$ or $3$.
	\begin{enumerate}[leftmargin=*]
		\item 
		 Let $E/K$ be a elliptic curve given by a Weierstrass equation
			\begin{equation*}
				\tilde{W}:	\tilde{y^2}+\tilde{a_1} \tilde{x}\tilde{y}+\tilde{a_3} \tilde{y}=\tilde{x^3}+ \tilde{a_2} \tilde{x^2}+\tilde{a_4} \tilde{x}+\tilde{a_6} 
			\end{equation*}
			Then we can find substitutions to a simpler Weierstrass equation $W$:
					\begin{enumerate}[leftmargin=*]
					
						\item if $\chark{K}=3$ and $j(E)\neq 0$:
							\begin{equation*}
								W: y^2=x^3+a_2 x^2 + a_6
							\end{equation*}
							with $\Delta_W=-a_2^3 a_6$ and $j(E)=-\frac{a_2^3}{a_6}$.
							The only substitutions preserving this form are:
							$x=u^2 x'$, $y=u^3 y'$ with $u\in K^\times$.
							
						\item if $\chark{K}=3$ and $j(E)= 0$:
							\begin{equation*}
								W: y^2=x^3+a_4 x + a_6
							\end{equation*}
							with $\Delta_W=-a_4^3$ and $j(E)=0$.
							The only substitutions preserving this form are:
							$x=u^2 x' + r$, $y=u^3 y'$ with $u\in K^\times, r\in K$.
							
						\item if $\chark{K}=2$ and $j(E)\neq 0$:
							\begin{equation*}
								W: y^2 + xy = x^3+a_2 x^2 + a_6
							\end{equation*}
							with $\Delta_W=a_6$ and $j(E)=\frac{1}{a_6}$.
							The only substitutions preserving this form are:
							$x= x'$, $y=y' + s x'$ with $s\in K$.	
											
						\item if $\chark{K}=2$ and $j(E)= 0$:
							\begin{equation*}
								W: y^2 + a_3 y = x^3 + a_4 x + a_6
							\end{equation*}
							with $\Delta_W=a_3 ^4$ and $j(E)=0$.
							The only substitutions preserving this form are:
							$x= u^2 x' + s^2$, $y=u^3 y' + u^2 s x' + t$ with $u\in K^\times$ and $s,t\in K$.								
					\end{enumerate}		
		\item 
		If we have $K=k(T)$ in (a) with $j(E)\in k$, then we can even choose a globally minimal Weierstrass equation for $E$ over $\al$ of the indicated simple form $W$ as given above.
	\end{enumerate}		
\end{prop}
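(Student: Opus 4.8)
The plan is to prove (a) by the classical normalization of Weierstrass equations adapted to residue characteristic $2$ and $3$, and then to obtain (b) by running that normalization over $k[T]$ instead of over $k(T)$.

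For (a) I would argue case by case. In characteristic $3$ the element $2$ is a unit, so completing the square in $\tilde y$ turns $\tilde W$ into $y^2=x^3+A_2x^2+A_4x+A_6$. One cannot complete the cube, but $c_4=A_2^{\,2}$ in characteristic $3$, so $j(E)\neq 0$ is the same as $A_2\neq 0$; then $x\mapsto x-A_4/(2A_2)$ removes the linear term and gives form (i), while $j(E)=0$ means $A_2=0$ and we are already at form (ii). In characteristic $2$ completing the square is unavailable, but now $c_4=\tilde a_1^{\,4}$, so $j(E)=0$ exactly when $\tilde a_1=0$; when $j(E)\neq 0$ one scales by $u=\tilde a_1$ to make $a_1=1$, then translates $x$ by $a_3$ and $y$ by $a_4$ to kill $a_3$ and $a_4$, reaching form (iii), and when $j(E)=0$ one translates $x$ by $a_2$ and reaches form (iv). The stated formulas for $\Delta_W$ and $j(E)$ drop out of the universal expressions for $b_i,c_4,\Delta$ after reducing modulo the characteristic. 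For the last clause of each item one writes the general admissible substitution $x=u^2x'+r$, $y=u^3y'+u^2sx'+t$, demands that the image again have the prescribed shape, and solves for $(u,r,s,t)$; for instance in (iii) preserving $a_1=1$ forces $u=1$ and preserving $a_3=a_4=0$ forces $r=t=0$, leaving exactly $y=y'+sx'$, and the other cases are analogous.

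For (b), since $k[T]$ is a principal ideal domain we may start from a globally minimal Weierstrass equation $\tilde W$ for $E$ over $\al$ (as recalled in Section 2) and run the reductions of part (a) over $k[T]$. A substitution $(u,r,s,t)$ preserves global minimality as soon as $u\in(k[T])^\times=k^\times$ and $r,s,t\in k[T]$ — then $\Delta$ changes only by a unit — so the point is to check that each reduction admits parameters in $k[T]$. When $j(E)=0$ this is automatic: in characteristic $2$, $c_4=\tilde a_1^{\,4}$ forces $\tilde a_1=0$ and translating $x$ by $a_2\in k[T]$ gives form (iv); in characteristic $3$, completing the square is itself a $k[T]$-substitution with $u=1$, after which $c_4=A_2^{\,2}$ forces $A_2=0$ and we are at form (ii). When $\operatorname{char}k=2$ and $j(E)\neq 0$: since $j(E)\in k^\times$ one has $\Delta=c_4^{\,3}/j(E)=\tilde a_1^{\,12}/j(E)$, so a prime dividing $\tilde a_1$ would give $v_{\mathfrak p}(\Delta)\geq 12$, $v_{\mathfrak p}(c_4)\geq 4$, $v_{\mathfrak p}(c_6)\geq 6$ (with $c_6=\tilde a_1^{\,6}$), which is incompatible with minimality; hence $\tilde a_1\in k^\times$, the scaling $u=\tilde a_1$ is admissible over $k[T]$, and the remaining $k[T]$-translations give form (iii).

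I expect the main obstacle to be the case $\operatorname{char}k=3$, $j(E)\neq 0$: after completing the square one has $y^2=x^3+A_2x^2+A_4x+A_6$ over $k[T]$, still globally minimal, and one must verify that $r=-A_4/(2A_2)$ already lies in $k[T]$, i.e. that $A_2\mid A_4$. Here $\Delta=c_4^{\,3}/j(E)=A_2^{\,6}/j(E)$, so a prime with $v_{\mathfrak p}(A_2)\geq 2$ would give $v_{\mathfrak p}(\Delta)\geq 12$, $v_{\mathfrak p}(c_4)\geq 4$, $v_{\mathfrak p}(c_6)\geq 6$ (with $c_6=-A_2^{\,3}$), contradicting minimality, so $A_2$ is squarefree; comparing $\Delta=A_2^{\,6}/j(E)$ with the explicit formula $\Delta=-A_2^{\,3}A_6+A_2^{\,2}A_4^{\,2}+2A_4^{\,3}$ then gives $A_2^{\,2}\mid A_4^{\,3}$, and with $A_2$ squarefree this forces $A_2\mid A_4$; the $k[T]$-translation $x\mapsto x-A_4/(2A_2)$ produces form (i). In every case the resulting simple equation lies over $k[T]$ and has the same discriminant valuation as $\tilde W$ at each prime, hence is globally minimal over $\al$. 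The one ingredient that genuinely needs care is the minimality criterion used repeatedly — that $v_{\mathfrak p}(\Delta)\geq 12$, $v_{\mathfrak p}(c_4)\geq 4$ and $v_{\mathfrak p}(c_6)\geq 6$ preclude minimality — which in residue characteristics $2$ and $3$ is more delicate than the tame case and should be invoked from the treatment of minimal models in \cite{liu}.
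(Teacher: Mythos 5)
Your part (a) is correct and is essentially the route the paper takes by citation to Silverman: split into $j=0$ and $j\neq 0$ via $c_4=\tilde a_1^{\,4}$ (characteristic $2$) resp.\ $c_4=b_2^{\,2}$ (characteristic $3$), normalize by explicit substitutions, and solve for the admissible $(u,r,s,t)$ preserving each normal form. Your $j=0$ cases of part (b) are also fine, since there the normalizing substitutions have $u=1$ and $r,s,t\in k[T]$ automatically.

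The gap is in the $j\neq 0$ cases of part (b), and it is concentrated in one tool: the claim that $v_{\mathfrak p}(c_4)\geq 4$, $v_{\mathfrak p}(c_6)\geq 6$ and $v_{\mathfrak p}(\Delta)\geq 12$ preclude minimality. Only the converse direction ($v(\Delta)<12$ or $v(c_4)<4$ implies minimal) survives in residue characteristic $2$ and $3$; in equal characteristic $2$ the pair $(c_4,c_6)=(a_1^4,a_1^6)$ does not even determine the curve, and wild ramification produces genuinely minimal equations satisfying all three inequalities. Worse, the conclusion you extract from it in case (iii) is false. Take $\chark{k}=2$ and the Artin--Schreier twist $y^2+xy=x^3+T^{-1}x^2+1$ of $y^2+xy=x^3+1$: its $j$-invariant is $1\in\ku$, but $T^{-1}\notin\{s^2+s\}$ because its pole at $0$ has odd order, so the twist is nontrivial and has additive reduction at $0$. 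A globally minimal equation over $\al$ is $y^2+Txy=x^3+Tx^2+T^6$, which one checks directly cannot be improved at $T=0$ even though $\nu_0(a_1)=1$, $\nu_0(c_4)=4$, $\nu_0(c_6)=6$, $\nu_0(\Delta)=12$. Hence $\tilde a_1\notin\ku$, and in fact no equation of the shape $y^2+xy=x^3+a_2x^2+a_6$ — whose discriminant is forced to equal $1/j\in\ku$ — can be minimal at $0$. So your argument for (iii) cannot be repaired; statement (b)(iii) itself needs the extra hypothesis, present in every application the paper makes of it, that $E$ has good reduction everywhere over $\al$. Under that hypothesis everything collapses: a globally minimal equation has $\Delta\in\ku$, and $j\in k$ gives $a_1^{12}=j\Delta\in k$ (characteristic $2$) resp.\ $b_2^{\,6}=j\Delta\in k$ (characteristic $3$), so the quantity you need to invert or divide by is already constant and the substitutions of part (a) visibly stay in $k[T]$ — which is all the paper's one-line verification amounts to. Your characteristic-$3$, $j\neq 0$ argument happens to reach a true conclusion ($A_2$ squarefree), but only because quadratic twists are tame there; the criterion you cite is not a valid justification for it.
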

\begin{proof}
	Part (a) is well known and may be proven by just writing down explicit substitutions. See for example \cite{silv1}[Appendix A, Prop. 1.1 and Proof of Prop. 1.2].\par
	Part (b) is an easy consequence of the proof of (a). Therefore start with a globally minimal Weierstrass equation for $E$ over $\al$:
	\begin{equation*}
		\tilde{W}:	\tilde{y^2}+\tilde{a_1} \tilde{x}\tilde{y}+\tilde{a_3} \tilde{y}=\tilde{x^3}+ \tilde{a_2} \tilde{x^2}+\tilde{a_4} \tilde{x}+\tilde{a_6} 
	\end{equation*}
	with $\tilde{a}_i \in k[T]$
	and check that the explicit substitutions given in [loc. cit.] change the discriminant just by an element in $\ku$ and that we still obtain coefficients in $k[T]$.
\end{proof}

\begin{korollar}\label{proof_3k}
	Let $k$ be an algebraically closed field of characteristic $3$.
	Let $E/k(T)$ be an elliptic curve with $j(E)\in k$ and good reduction everywhere over $\mathbb{A}^1_k$.
	Then $E$ is already constant, if $j(E)\neq 0$.
\end{korollar}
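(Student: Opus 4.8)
The plan is to reduce $E$ to a particularly simple globally minimal Weierstrass equation by means of Proposition \ref{proof_23_weq}(b) and then to read off the claim from the shape of its discriminant. Since $\chark{k}=3$ and $E$ has constant $j$-invariant with $j(E)\neq 0$, part (b) of Proposition \ref{proof_23_weq}, applied in the case $\chark{K}=3$, $j(E)\neq 0$, provides a \emph{globally minimal} Weierstrass equation for $E$ over $\al$ of the form
\begin{equation*}
	W:\quad y^2=x^3+a_2 x^2+a_6,\qquad a_2,a_6\in k[T],
\end{equation*}
whose discriminant is $\Delta_W=-a_2^3 a_6$.

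Next I would use the basic good-reduction criterion: since $W$ is globally minimal over $S=\spec{k[T]}=\al$, the curve $E$ has good reduction at a closed point $s\in\al$ if and only if $\nu_s(\Delta_W)=0$. As $E$ has good reduction everywhere over $\al$ and $k$ is algebraically closed, this forces the nonzero polynomial $\Delta_W=-a_2^3 a_6$ to have no zeros in $\al$, hence $\Delta_W\in\ku$. In particular $a_2,a_6\neq 0$, and from $0=\deg(a_2^3 a_6)=3\deg a_2+\deg a_6$, both summands being non-negative, we get $\deg a_2=\deg a_6=0$, i.e. $a_2,a_6\in\ku$. Thus $W$ already has constant coefficients, and $E$ is constant by definition.

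I do not expect a genuine obstacle here: the substantive input is Proposition \ref{proof_23_weq}(b), and the hypothesis $j(E)\neq 0$ is used precisely to land in the case where the minimal discriminant factors as $-a_2^3 a_6$, so that good reduction everywhere simultaneously pins down \emph{both} coefficients. By contrast, in the excluded case $j(E)=0$ the analogous simple form is $y^2=x^3+a_4 x+a_6$ with $\Delta_W=-a_4^3$, and good reduction everywhere only forces $a_4$ to be constant while leaving $a_6$ unconstrained --- which is exactly why the statement must restrict to $j(E)\neq 0$.
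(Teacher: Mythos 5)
Your proof is correct and follows essentially the same route as the paper, which simply declares the corollary an immediate consequence of Proposition \ref{proof_23_weq}; you have merely filled in the details (globally minimal form $y^2=x^3+a_2x^2+a_6$, good reduction forcing $\Delta_W=-a_2^3a_6\in\ku$, hence $a_2,a_6\in\ku$ by degree count). If anything, your citation of part (b) is the more precise one, since global minimality over $\al$ is exactly what that part supplies; your closing remark about why $j(E)=0$ must be excluded matches the paper's Example \ref{proof_2eb}(b).
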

\begin{proof}
	This is an immediate consequence of Proposition \ref{proof_23_weq} (a)(i).
\end{proof}

Part (b) of Theorem \ref{statement_2}, which says that every elliptic curve $E/k(T)$ with good reduction everywhere is constant, may be proven by direct calculations involving Weierstrass equations as in Proposition \ref{proof_23_weq}:

\begin{proof}[Proof of Theorem \ref{statement_2} (b)]
	Let $E$ be an elliptic curve with good reduction everywhere over $\pl$.
	By the proof of Part (a) we have $j(E)\in k$. Let $T$ resp. $T^{-1}$ be uniformizers at $0 \in \pl$ resp. $\infty \in \pl$. We prove the Proposition by choosing globally minimal Weierstrass equations for $E$ over $\pl \setminus \{\infty\}=\spec{k[T]}$ as well as for $\pl \setminus \{0\}=\spec{k[T^{-1}]}$. Then comparing these equations over $\pl \setminus \{\infty,0\}$ yields substitutions giving constant elliptic curves.\par
	  We start with characteristic $3$. By Corollary \ref{proof_3k} we may assume $j(E)=0$ and by Proposition \ref{proof_23_weq} (b) we may choose a globally minimal Weierstrass equation $W$ for $E$ over $\spec{k[T]}=\pl \setminus \{\infty\}$:
	\begin{equation*}
		W:	y^2=x^3+a_4 x+a_6 
	\end{equation*}
	with $a_6 \in k[T]$ and $a_4\in \ku$, since $-a_4^3=\Delta_W\in\ku$.
	We also obtain a globally minimal Weierstrass equation $W'$ for $E$ over $\spec{k[T^{-1}]}=\pl \setminus \{0\}$:
	\begin{equation*}
		W':	y'^2=x'^3+a'_4 x'+a'_6 
	\end{equation*}	
	with $a'_4 \in \ku, a'_6 \in k[T^{-1}]$ say $a'_6=\sum\limits_{i=0}^{k} b_i T^{-i}$. Since $W'$ and $W$ define the same elliptic curve over $k(T)$ there exists a change of variables between $W$ and $W'$.
	By Proposition \ref{proof_23_weq} (a)(ii) such a substitution must be of the form $y=u^3 y'$, $x=u^2 x' + r$ for some $u,r\in k(T)$ . Inserting gives further restrictions: 
	\begin{equation*}\label{proof_2l_eq3}
			u^4=\frac{a_4}{a'_4} \in \ku, \quad r^3+a_4 r -(u^6a'_6 - a_6)=0
	\end{equation*}
	In particular we see $u\in \ku$. Applying Gauss's lemma to $X^3+a_4 X -(u^6a'_6 - a_6)=0$ shows that such a $r\in k(T)$ must already be in $k[T,T^{-1}]$, say $r=\sum\limits_{i=-m}^{n} r_i T^i \in k[T,T^{-1}]$. By setting $r_{+}:=\sum\limits_{i=0}^{n} r_i T^i$ and $r_{-}:=\sum\limits_{i=-m}^{-1} r_i T^i$ we get:
	\begin{equation*}
		\underbrace{(r_{+}^3 + a_4 r_{+}-u^6 b_0 + a_6)}_{\in k[T]} + \underbrace{(r_{-}^3+  a_4 r_{-} -u^6 (a'_6-b_0) )}_{\in T^{-1} k[T^{-1}]}=0
	\end{equation*}
	Thus we see $r_{+}^3 + a_4 r_{+}-u^6 b_0 + a_6=0$, but this means that substituting $y=\bar{y}$, $x= \bar{x} + r_{+}$ in $W$ yields a Weierstrass equation $\bar{W}$ for $E$ with constant coefficients:
	\begin{equation*}
		\bar{W}: \bar{y}^2=\bar{x}^3+a_4 \bar{x}+ u^6 b_0
	\end{equation*}
	with $a_4 \in k$ and $u^6 b_0 \in k$, since $u,b_0 \in k$.\par
	  Since the argument in characteristic $2$ is similar we will just sketch the proof. 
	We first consider the case $j(E)\neq 0$.
	By Proposition \ref{proof_23_weq} (b) we may choose a globally minimal Weierstrass equation $W$ for $E$ over $\spec{k[T]}=\pl \setminus \{\infty\}$ of the form
	\begin{equation*}
		W:	y^2 + xy=x^3+ a_2 x^2+a_6 
	\end{equation*}
	with $a_2 \in k[T], a_6\in \ku$ (we have $a_6=\Delta \in \ku$),\\
	as well as for $E$ over $\spec{k[T^{-1}]}=\pl \setminus \{0\}$:
	\begin{equation*}
		W':	y'^2 + x'y'=x'^3+ a'_2 x'^2+a'_6 
	\end{equation*}	
	with $a'_6 \in \ku, a'_2 \in k[T^{-1}]$ say $a'_2=\sum\limits_{i=0}^{k} b_i T^{-i}$. A substitution between $W$ and $W'$ must be of the form $y=y'+s x'$, $x=x'$ for some $s \in k(T)$ satisfying $s^2+s+a_2+a'_2=0$. Applying Gauss's lemma as above we must have $s=\sum\limits_{i=-m}^{n} s_i T^i \in k[T,T^{-1}]$.
	By setting $s_{+}:=\sum\limits_{i=0}^{n} s_i T^i$ we get $s_{+}^2 + s_{+} + b_0 + a_2=0$, but this means that substituting $y=\bar{y}+s_{+}\bar{x}$, $x= \bar{x}$ in $W$ yields a Weierstrass equation $\bar{W}$ for $E$ with constant coefficients:
	\begin{equation*}
		\bar{W}: \bar{y}^2+\bar{x}\bar{y}=\bar{x}^3+b_0 \bar{x}^2+ a_6
	\end{equation*}
	$a_6 \in \ku$ and $b_0 \in k$.\par
	   Now we consider the case characteristic $2$ and $j(E)=0$.
	Again we get globally minimal Weierstrass equations for $E$ over $\spec{k[T]}$
	\begin{equation*}
		W:	y^2 + a_3 y=x^3+ a_4 x+a_6 
	\end{equation*}
	with $a_3 \in \ku$ and $a_4,a_6 \in k[T]$,
	as well as for $E$ over $\spec{k[T^{-1}]}$
	\begin{equation*}
		W':	y'^2 + a'_3 y'=x'^3+ a'_4 x'+a'_6 
	\end{equation*}
	with $a'_3 \in \ku$ and $a'_4,a'_6 \in k[T^{-1}]$.
	Comparing them over $\spec{k[T,T^{-1}]}$ we get a substitution of the form $x=u^2 x' + s^2$ and $y=u^3 y' + u^2 s x' +t$ with $u,s,t \in k(T)$ (see Proposition \ref{proof_23_weq} (a)(iv)  ) satisfying:
	\begin{align}
		u^3 &= \frac{a_3}{a'_3}  \label{proof_2l_eqsubst_a}\\
		s^4 + a_3 s + a_4 + u^4 a'_4   &= 0  \label{proof_2l_eqsubst_b}\\
		t^2 + a_3 t + s^6 + a_4 s^2 + a_6 + u^6 a'_6 &= 0  \label{proof_2l_eqsubst_c}
	\end{align}
	In particular \eqref{proof_2l_eqsubst_a} yields $u\in \ku$ and together with \eqref{proof_2l_eqsubst_b} we get $s\in \kTf$. After substituting 
	$y=\tilde{y}+s_{+} \tilde{x}$, $x=\tilde{x} + s_{+}^2$ in $W$ as well as $y'=\tilde{y}'+u^{-1}s_{-} \tilde{x}'$, $x'=\tilde{x}' + u^{-2}s_{-}^2$ in $W'$ we may assume $a_4$ and $a_4'$ constant. Then \eqref{proof_2l_eqsubst_c} yields $t\in \kTf$. One more substitution of the form $\tilde{y}=\bar{y}+t_{+}$, $\tilde{x}=\bar{x}$ yields a Weierstrass equation $\bar{W}$ for $E$ with constant coefficients.\par
	The following examples shows that in both characteristics $2$ and $3$ there are non-constant elliptic curves, if we allow bad reduction in one point:
\end{proof}

	\begin{example}\label{proof_2eb}
	\mbox{}
	  \begin{enumerate}[leftmargin=*]
	      \item Let $k$ be an algebraically closed field of characteristic $2$. Then the elliptic curve $E/k(T)$ defined by
		  \begin{equation*}
			  W:\quad y^2+xy=x^3+Tx^2+1
		  \end{equation*}
		  has discriminant $\Delta_W=1$ and $j$-invariant $j(E)=1$.
	  From $\Delta_ {W}\in \ku=k[T]^\times$ we see that the elliptic curve $E$ has good reduction everywhere over $\al$ and a similar computation as done in Example \ref{proof_1eb} shows that $E$ is non-constant.
	      \item Let $k$ be an algebraically closed field of characteristic $3$. Then the elliptic curve $E/k(T)$ given by
		\begin{equation*}
			W: y^2=x^3- x +T
		\end{equation*}
		with $\Delta_W=1$ and $j(E)=0$ has good reduction everywhere over $\al$ and is also
		non-constant.
	  \end{enumerate}

	\end{example}

\bibliographystyle{amsalpha}
\bibliography{ellcurves_p1_arxiv_v2}	
		
\end{document}